%
\documentclass[11pt, reqno]{amsart}
\usepackage{lineno,hyperref}

\usepackage{cases}
\usepackage[square, comma, sort&compress, numbers]{natbib}
\usepackage{float}
\usepackage{pifont}
\usepackage{bbding}
\usepackage{amssymb}
\usepackage{mathrsfs}
\usepackage{subfigure}
\usepackage{caption}
\usepackage{geometry}
\usepackage{color}
\usepackage{amsmath, amsthm, amscd, amsfonts, amssymb, graphicx, color}
\usepackage{lineno}
\textheight 20.85truecm \textwidth 14.5truecm
\setlength{\oddsidemargin}{0.35in}\setlength{\evensidemargin}{0.35in}
\bibliographystyle{alpha}
\setlength{\topmargin}{-.5cm}

\newtheorem{theorem}{Theorem}
\newtheorem{lemma}{Lemma}

\newtheorem{definition}{Definition}

\newtheorem{remark}{Remark}
\newtheorem{example}{Example}

\numberwithin{equation}{section}

\newcommand{\abs}[1]{\left\vert#1\right\vert}
\newcommand{\U}{\mbox{$\mathbb{U}$}}

\newcommand{\N}{\mbox{$\mathbb{N}$}}

\date{}
\begin{document}
\setcounter{page}{1}

\title[On a subclass of starlike functions]
{On a subclass of starlike functions associated\\ with a vertical strip domain}

\author[Yong Sun, Zhi-Gang Wang, Antti Rasila and Janusz Sok\'{o}{\l}]{Yong Sun, Zhi-Gang Wang, Antti Rasila and Janusz Sok\'{o}{\l}}

\vskip.10in
\address{\noindent Yong Sun\vskip.05in
School of Science, Hunan Institute of Engineering,
Xiangtan, 411104, Hunan, People's Republic of China P. R. China.}
\vskip.05in
\email{\textcolor[rgb]{0.00,0.00,0.84}{yongsun2008$@$foxmail.com}}

\vskip.05in
\address{\noindent Zhi-Gang Wang\vskip.05in
 School of Mathematics and Computing Science, Hunan
First Normal University, Changsha 410205, Hunan, P. R. China.}
\vskip.05in
\email{\textcolor[rgb]{0.00,0.00,0.84}{wangmath$@$163.com}}

\address{\noindent Antti Rasila\vskip.05in
Guangdong Technion-Israel Institute of Technology, 241 Daxue Road, Shantou 515063, Guangdong,
P. R. China.}
\email{\textcolor[rgb]{0.00,0.00,0.84}{rasila$@$iki.fi}}

\address{\noindent Janusz Sok\'{o}{\l}\vskip.05in
Faculty of Mathematics and Natural Sciences, University of Rzesz\'{o}w,
ul. Rejtana 16A, 35-310 Rzesz\'{o}w, Poland.}
\email{\textcolor[rgb]{0.00,0.00,0.84}{jsokol$@$prz.edu.pl}}


\subjclass[2010]{Primary 30C45; Secondary 30C55.}

\keywords{Analytic function; univalent function; starlike function; differential subordination.}

\begin{abstract}
In this paper, we consider a subclass of starlike functions associated with a vertical strip domain.
Several results concerned with integral representations, convolutions, and
coefficient inequalities for functions belonging to this class are obtained.
Furthermore, we consider radius problems and inclusion relations
involving certain classes of strongly starlike functions, parabolic starlike functions and other types of starlike functions.
The results are essential improvements of the corresponding results obtained by Kargar {\it et al.}, and the derivations are similar to those used earlier by Sun
{\it et al.} and Kwon {\it et al.}.
\end{abstract}

\vskip.20in

\maketitle

\tableofcontents

\section{Introduction}

Let $\mathcal{A}$ denote the class of the functions of the form:
\begin{equation}\label{11}
f(z)=z+\sum_{n=2}^{\infty}a_{n}z^{n},
\end{equation}
which are analytic and univalent in the open unit disk $\U=\{z\in\mathbb{C}:\ |z|<1\}$. A function $f\in\mathcal{A}$ is
said to be {\it starlike of order} $\beta\ (0\leq \beta<1)$, if it satisfies the condition:
\begin{equation*}
\Re\bigg(\frac{zf'(z)}{f(z)}\bigg)>\beta \quad (z\in\U).
\end{equation*}
We denote by $\mathcal{S}^*(\beta)$ the class of  starlike functions of order $\beta$. A function $f\in{\mathcal A}$ is said to be {\it convex of order} $\beta\ (0\leq \beta<1)$, if it satisfies the condition:
\begin{equation*}
\Re\bigg(1+\frac{zf''(z)}{f'(z)}\bigg)>\beta \quad (z\in\U).
\end{equation*}
We denote by $\mathcal{K}(\beta)$ the class of convex functions of order $\beta$.
For simplicity, we also use the notations $\mathcal{S}^{*}:=\mathcal{S}^*(0)$ and $\mathcal{K}:=\mathcal{K}(0)$.

A function $f\in\mathcal{A}$ is said to be {\it strongly starlike} of order $\gamma\ (0\leq \gamma<1)$ if
\begin{equation*}
\left|\arg\left(\frac{zf'(z)}{f(z)}\right)\right|\leq \frac{\pi}{2}\gamma \quad (z\in\U).
\end{equation*}
We denote by $\mathcal{SS}(\gamma)$ the class of strongly starlike functions of order $\gamma$. We also consider the subclass $\mathcal{PS}\subset\mathcal{A}$ of {\it parabolic starlike functions} in $\U$ (see \cite{ron1993}), which satisfy the inequality:
\begin{equation*}
\left|\frac{zf'(z)}{f(z)}-1\right|\leq\Re\left(\frac{zf'(z)}{f(z)}\right) \quad (z\in\U).
\end{equation*}

Recall that an analytic function $w$ of the unit disk $\U$ is a {\it Schwarz function} if it satisfies the conditions
of the Schwarz lemma, i.e.,
$$w(0)=0\ {\rm and}\ \abs{w(z)}<1 \ (z \in {\U}).$$

For two functions $f$ and $g$, analytic in ${\U}$, we say that the function $f$ is {\it subordinate} to $g$ in ${\U}$, and write
$$f(z)\prec g(z)\quad(z\in {\U}),$$
if there exists a Schwarz function $w(z)$, such that
$$f(z)=g\big(w(z)\big)\quad (z\in{\U}).$$

It is well known that if $f(z)\prec g(z)\; (z\in{\U})$, then $f(0)=g(0)$ and $f({\U})\subset g({\U})$.
Furthermore, if the function $g$ is univalent in ${\U}$, then we have the equivalence:
\begin{equation*}
f(z)\prec g(z)\quad (z\in{\U})\; \Longleftrightarrow \; f(0)=g(0)\ {\rm and}\ f({\U})\subset g({\U}).
\end{equation*}

In 1998, Sok\'{o}{\l} \cite{sok1998} introduced the class $\mathcal{SL}\subset\mathcal{S}^{*}$, which consists of the functions $f\in\mathcal{A}$ such that
\begin{equation*}
\frac{zf'(z)}{f(z)}\prec \sqrt{1+z} \quad (z\in\U).
\end{equation*}

In a recent paper, Kargar {\it et al.} \cite{kar2017} investigated the class $\mathcal{MS}(\alpha)$ defined below,
and obtained several radius results for certain well-known function classes.

\begin{definition}\label{d1}
{\rm A function $f\in\mathcal{A}$ is said to belong to the class $\mathcal{MS}(\alpha)\; (\pi/2\leq\alpha<\pi)$, if it satisfies the following conditions:
\begin{equation}\label{12}
1+\frac{\alpha-\pi}{2\sin{\alpha}}<\Re\left(\frac{zf'(z)}{f(z)}\right)<1+\frac{\alpha}{2\sin{\alpha}}  \quad (z\in\U).
\end{equation}
}
\end{definition}

\begin{remark}
{\rm
We note that the inequalities (see \cite{kar2017})
\begin{equation}\label{k}
1-\frac{\pi}{4}\leq 1+\frac{\alpha-\pi}{2\sin{\alpha}}<\frac{1}{2},  \  1+\frac{\alpha}{2\sin{\alpha}}\geq 1+\frac{\pi}{4}  \quad (\pi/2\leq\alpha<\pi).
\end{equation}
It is clear that
\begin{equation*}
\mathcal{MS}(\alpha)\subset\mathcal{S}^{*} \ (\pi/2\leq\alpha<\pi)
\ {\rm and} \ \mathcal{MS}(\pi/2)\subset\mathcal{S}(1-\pi/4,\; 1+\pi/4),
\end{equation*}
where the class $\mathcal{S}(\beta,\; \gamma)$, $0\leq \beta<1<\gamma$, was considered recently by Kwon {\it et al.} in \cite{kscs}.
}
\end{remark}

This paper is organized as follows. In Section 2, we recall certain preliminary lemmas, which are useful in the study of the above classes of functions.
In Section 3, we consider some basic properties of the class $\mathcal{MS}(\alpha)$, such as integral representation, property of convolution,
sufficient condition and coefficient inequalities.
In Section 4, we consider radius problems and inclusion relations for certain classes of strongly starlike functions, parabolic starlike functions
and $\mathcal{SL}\subset\mathcal{S}^{*}$, which are closely related to the class $\mathcal{MS}(\alpha)$,
and the derivations are similar to those used earlier by Sun {\it et al.} \cite{sjr2017} and Kwon {\it et al.} \cite{kscs}.
Our results are essential improvements of the corresponding results obtained by Kargar {\it et al.} \cite{kar2017}.
\vskip.20in

\section{Preliminaries}

Recently, Kargar {\it et al.} \cite{kar2017} introduced the analytic function $F_{\alpha}$ and the vertical strip $\Omega_{\alpha}$,
which are defined as follows:
\begin{equation}\label{13}
F_{\alpha}(z):=\frac{1}{2i\sin{\alpha}}\log\bigg(\frac{1+e^{i\alpha}z}{1+e^{-i\alpha}z}\bigg)  \quad (z\in\U)
\end{equation}
and
\begin{equation}\label{14}
\Omega_{\alpha}:=\left\{\omega\in\mathbb{C}: \; \frac{\alpha-\pi}{2\sin{\alpha}}<\Re\left(\omega\right)<\frac{\alpha}{2\sin{\alpha}}\right\},
\end{equation}
where $\pi/2\leq\alpha<\pi$. The function $F_{\alpha}$, defined by \eqref{13}, is convex and univalent in $\U$.
In addition, $F_{\alpha}$ maps $\U$ onto $\Omega_{\alpha}$, or onto the convex hull of three points
(one of which may be that point at infinity) on the boundary of $\Omega_{\alpha}$.
In other words, the image of $\U$ may be a vertical strip for $\pi/2\leq\alpha<\pi$.
In other cases, the image can be for example a half strip, a quadrilateral, or a triangle (see \cite{dor2001}).

\vskip.05in

We note that the function $F_{\alpha}$ can be written in the form
\begin{equation*}\label{15}
F_{\alpha}(z)=z+\sum_{n=2}^{\infty}B_{n}(\alpha)z^{n}  \quad (\pi/2\leq\alpha<\pi;\; z\in\U),
\end{equation*}
where
\begin{equation}\label{16}
B_{n}(\alpha)=(-1)^{(n-1)}\frac{\sin n\alpha}{n\sin\alpha} \quad (\pi/2\leq\alpha<\pi;\; n\in\N).
\end{equation}

\vskip.05in

In the recent years, there has been significant interesting results about the class of normalized analytic
functions $f\in\mathcal{A}$ that map $\U$ onto vertical strip, see e.g. \cite{kscs, ko, ltmn, sk, sk2016, sjr2017, wsj2015}.

\vskip.05in

In order to prove the main results, we need the following lemmas.

\begin{lemma}\label{lem1}{\rm (see \cite{kar2017})}
Let $f\in\mathcal{A}$. Then $f\in\mathcal{MS}(\alpha)\; (\pi/2\leq\alpha<\pi)$ if and only if
\begin{equation}\label{18}
\bigg(\frac{zf'(z)}{f(z)}-1\bigg)\prec F_{\alpha}(z)=\frac{1}{2i\sin{\alpha}}\log\bigg(\frac{1+e^{i\alpha}z}{1+e^{-i\alpha}z}\bigg)  \quad (z\in\U).
\end{equation}
\end{lemma}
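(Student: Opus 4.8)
The plan is to obtain the lemma directly from the subordination-to-a-univalent-function criterion recalled in the Introduction, using the mapping properties of $F_\alpha$ listed at the start of Section~2. First I would set $p(z):=\frac{zf'(z)}{f(z)}-1$; it is analytic in $\U$ (the hypotheses on either side of the claimed equivalence force $f(z)/z\ne0$), and since $f(z)=z+a_2z^2+\cdots$ gives $\frac{zf'(z)}{f(z)}\to1$ as $z\to0$, we have $p(0)=0=F_\alpha(0)$ automatically from the normalization $f\in\mathcal{A}$. Hence the value-matching condition in the criterion is supplied for free, and the whole statement reduces to a single set inclusion.

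Next I would invoke the equivalence recalled in the Introduction: because $F_\alpha$ is univalent in $\U$, we have $p(z)\prec F_\alpha(z)$ if and only if $p(0)=F_\alpha(0)$ and $p(\U)\subset F_\alpha(\U)$. With the value at the origin already matched, the claim collapses to $p(\U)\subset F_\alpha(\U)$. The crucial input is the assertion, stated in Section~2, that for $\pi/2\le\alpha<\pi$ the convex univalent function $F_\alpha$ of \eqref{13} maps $\U$ \emph{onto} the full vertical strip $\Omega_\alpha$ of \eqref{14}. I expect this to be the main obstacle, since the remark in Section~2 warns that for other parameter configurations the image may degenerate into a half-strip, a quadrilateral, or a triangle; in any such case $F_\alpha(\U)$ would be a \emph{proper} subset of $\Omega_\alpha$ and the equivalence would fail. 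I would secure the identification $F_\alpha(\U)=\Omega_\alpha$ by tracking the composition defining \eqref{13}: the M\"obius map $z\mapsto\frac{1+e^{i\alpha}z}{1+e^{-i\alpha}z}$ carries $\U$ onto the half-plane bounded by the line through the origin of argument $\alpha$ and containing the image $1$ of the origin (its pole $-e^{i\alpha}$ sits on $\partial\U$, which forces the boundary circle to open into that line); the logarithm then unrolls this half-plane into a horizontal strip of height $\pi$; and the factor $\frac{1}{2i\sin\alpha}$ rotates and rescales it onto $\Omega_\alpha$, with the endpoints $z=\pm1$ sent to the real boundary values $\frac{\alpha}{2\sin\alpha}$ and $\frac{\alpha-\pi}{2\sin\alpha}$.

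Finally I would translate the inclusion into real-part language and read off the result. Since $F_\alpha(\U)=\Omega_\alpha=\{\omega\in\C:\frac{\alpha-\pi}{2\sin\alpha}<\Re(\omega)<\frac{\alpha}{2\sin\alpha}\}$, the inclusion $p(\U)\subset F_\alpha(\U)$ is equivalent to $\frac{\alpha-\pi}{2\sin\alpha}<\Re\big(\frac{zf'(z)}{f(z)}-1\big)<\frac{\alpha}{2\sin\alpha}$ for every $z\in\U$, that is, after adding $1$ throughout, to the defining inequalities \eqref{12} of $\mathcal{MS}(\alpha)$. Running this chain of equivalences in both directions yields $f\in\mathcal{MS}(\alpha)\iff\big(\frac{zf'(z)}{f(z)}-1\big)\prec F_\alpha(z)$, which is precisely the assertion. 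Apart from the identification $F_\alpha(\U)=\Omega_\alpha$, the argument is a routine unwinding of definitions.
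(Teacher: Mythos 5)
Your proposal is correct, and it is essentially the intended argument: the paper itself gives no proof of this lemma (it is quoted from Kargar \emph{et al.} \cite{kar2017}), but Sections 1--2 record precisely the ingredients you use --- the univalence of $F_{\alpha}$, the identification $F_{\alpha}(\U)=\Omega_{\alpha}$ for $\pi/2\leq\alpha<\pi$, and the criterion $f\prec g \Leftrightarrow f(0)=g(0),\ f(\U)\subset g(\U)$ for univalent $g$ --- so your proof matches the standard derivation. Your verification that $F_{\alpha}(\U)$ is the \emph{full} strip is also sound: the pole $-e^{i\alpha}$ of the M\"{o}bius factor lies on $\partial\U$, so $\partial\U$ maps onto the line $\R e^{i\alpha}$ and $\U$ onto the half-plane containing $1$, which the logarithm and the factor $1/(2i\sin\alpha)$ carry onto $\Omega_{\alpha}$, consistent with the boundary values $F_{\alpha}(1)=\frac{\alpha}{2\sin\alpha}$ and $F_{\alpha}(-1)=\frac{\alpha-\pi}{2\sin\alpha}$.
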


\vskip.05in

\begin{lemma}\label{lem2}{\rm (see \cite{mm1978})}
Let $h$ be analytic and convex univalent in $\U$, and $\beta,\gamma\in\mathbb{R}$ with $\Re\big(\beta h(z)+\gamma\big)\geq 0$.
If $q$ is analytic in $\U$, with $q(0)=h(0)$, then
\begin{equation*}
q(z)+\frac{zq'(z)}{\beta q(z)+\gamma}\prec h(z)\Longrightarrow q(z)\prec h(z)\quad (z\in\U).
\end{equation*}
\end{lemma}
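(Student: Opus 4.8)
The plan is to prove the implication by contradiction, using the fundamental boundary-contact lemma of Miller and Mocanu. Suppose $q\not\prec h$. Since $h$ is convex univalent with $h(0)=q(0)$, the image $h(\U)$ is a convex domain, and $q$ maps some concentric subdisk into $h(\U)$ while first touching its boundary. The backbone of the argument is the classical fact that there then exist points $z_0\in\U$, $\zeta_0\in\partial\U$ (chosen where $h$ and $h'$ possess finite boundary values), and a real number $m\geq 1$ such that $q(z_0)=h(\zeta_0)$ and $z_0q'(z_0)=m\,\zeta_0 h'(\zeta_0)$. I would first quote this lemma, since everything rests on it.

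Next I would evaluate the differential expression at $z_0$. Writing $\psi(z):=q(z)+zq'(z)/(\beta q(z)+\gamma)$ and substituting the two identities above yields
$$\psi(z_0)=h(\zeta_0)+\frac{m\,\zeta_0 h'(\zeta_0)}{\beta h(\zeta_0)+\gamma}.$$
The goal is to show that this value lies outside $h(\U)$, which contradicts the hypothesis $\psi\prec h$ (equivalently $\psi(\U)\subset h(\U)$, the initial values matching since $\psi(0)=q(0)=h(0)$). To this end I would invoke the supporting-line characterization of convexity: for convex univalent $h$ one has $\Re\big((h(z)-h(\zeta_0))/(\zeta_0 h'(\zeta_0))\big)<0$ for every $z\in\U$ and every admissible $\zeta_0\in\partial\U$, because $\zeta_0 h'(\zeta_0)$ points in the outward normal direction at the boundary point $h(\zeta_0)$ and the convex set $h(\U)$ lies on one side of the tangent line there.

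The computation then collapses to a single real part. Setting $p:=\beta h(\zeta_0)+\gamma$, the hypothesis $\Re(\beta h(z)+\gamma)\geq 0$ in $\U$ passes to the boundary to give $\Re(p)\geq 0$, whence
$$\Re\!\left(\frac{\psi(z_0)-h(\zeta_0)}{\zeta_0 h'(\zeta_0)}\right)=m\,\Re\!\left(\frac{1}{p}\right)=m\,\frac{\Re(p)}{\abs{p}^2}\geq 0,$$
using $m\geq 1>0$. Comparing with the strict inequality $<0$ valid for every image point of $h$, I conclude $\psi(z_0)\notin h(\U)$, the desired contradiction.

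The main obstacle is not this short final estimate but the justification of the two structural inputs: the Miller--Mocanu boundary-contact lemma (which must be invoked so that $\zeta_0$ avoids the exceptional points where $h(\zeta_0)$ or $h'(\zeta_0)$ fails to exist) and the convex supporting-line inequality, together with the degenerate behaviour of $p=\beta h(\zeta_0)+\gamma$. When $\Re(p)=0$ with $p\neq 0$ the displayed real part is exactly $0$, still contradicting the strict negativity for interior image values; when $p=0$ the value $\psi(z_0)$ is forced to infinity, again outside the finite convex domain $h(\U)$, so the argument survives. Since this is a classical result quoted from \cite{mm1978}, I would present the contradiction scheme and the two cited facts, carrying out only the one-line real-part computation in full detail.
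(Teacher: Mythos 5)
The paper offers no proof of Lemma~\ref{lem2} at all --- it is quoted verbatim from Miller and Mocanu \cite{mm1978} --- and your argument is precisely the classical proof from that source: the boundary-contact lemma producing $z_0$, $\zeta_0$, $m\geq 1$ with $q(z_0)=h(\zeta_0)$ and $z_0q'(z_0)=m\zeta_0 h'(\zeta_0)$, followed by the supporting-line inequality for the convex domain $h(\U)$ and the one-line computation $\Re\big(m/(\beta h(\zeta_0)+\gamma)\big)\geq 0$. It is correct as written, including the boundary cases; the only cosmetic refinement is that when $\beta h(\zeta_0)+\gamma=0$ one should argue via analyticity of $\psi$ in $\U$ together with $h'(\zeta_0)\neq 0$ (guaranteed since $\zeta_0$ avoids the exceptional set), rather than speak of an infinite value of $\psi(z_0)$.
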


\vskip.05in

\begin{lemma}\label{lem3}{\rm (see \cite{r})}
Let the function $r(z)$ given by
$$r(z)=\sum_{n=1}^{\infty}C_{n}z^{n}$$
be analytic and univalent in $\U$, and suppose that $r(z)$ maps $\U$ onto a convex domain. If the function $q(z)$ given by
$$q(z)=\sum_{n=1}^{\infty}A_{n}z^{n}$$
is analytic in $\U$ and satisfies the following subordination relation:
$$q(z)\prec r(z) \quad (z\in\U),$$
then
$$\abs{A_n}\leq \abs{C_1} \quad (n\in\mathbb{N}).$$
\end{lemma}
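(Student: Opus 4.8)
The plan is to reduce the subordination to a composition with a Schwarz function and then to exploit the convexity of $r(\U)$ through the classical integral representation of normalized convex functions. Since $q\prec r$, by the definition of subordination there is a Schwarz function $w$ with $q(z)=r\bigl(w(z)\bigr)$. Writing $w(z)^{k}=\sum_{j\ge k}\beta_{k,j}z^{j}$ and expanding $r(w(z))=\sum_{k\ge1}C_{k}w(z)^{k}$, comparison of the $z^{n}$-coefficients yields $A_{n}=\sum_{k=1}^{n}C_{k}\beta_{k,n}$. Because $r$ is univalent we have $C_{1}=r'(0)\neq0$, and replacing $r,q$ by $r/C_{1},q/C_{1}$ (which preserves convexity of the image and the subordination) we may assume $C_{1}=1$, so that the goal becomes $\abs{A_{n}}\le1$. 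Everything then rests on two ingredients: a uniform bound on the power-sums of $w$, and a representation of the coefficients $C_{k}$.

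First I would establish that for every $x$ with $\abs{x}=1$,
\begin{equation*}
\Bigl\lvert \sum_{k=1}^{n}\beta_{k,n}\,x^{k}\Bigr\rvert \le 1 .
\end{equation*}
To see this, fix such an $x$ and set $\Psi_{x}(z)=\sum_{k\ge1}x^{k}w(z)^{k}=\dfrac{x\,w(z)}{1-x\,w(z)}$, which is well defined since $\abs{x\,w(z)}<1$. Then $1+2\Psi_{x}(z)=\dfrac{1+x\,w(z)}{1-x\,w(z)}$ has positive real part and value $1$ at the origin, hence is a Carath\'eodory function; its Taylor coefficients are therefore bounded by $2$, so those of $\Psi_{x}$ are bounded by $1$. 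Since $[z^{n}]\Psi_{x}=\sum_{k=1}^{n}\beta_{k,n}x^{k}$, the displayed inequality follows.

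Next I would invoke the representation of a normalized convex function as an average of half-plane maps: there is a probability measure $\mu$ on the unit circle with $r(z)=\int_{\abs{x}=1}\frac{z}{1-xz}\,d\mu(x)$, whence $C_{k}=\int_{\abs{x}=1}x^{k-1}\,d\mu(x)$ and $C_{1}=\int d\mu=1$. Substituting into $A_{n}=\sum_{k=1}^{n}C_{k}\beta_{k,n}$ and interchanging the finite sum with the integral gives
\begin{equation*}
A_{n}=\int_{\abs{x}=1}\Bigl(\sum_{k=1}^{n}\beta_{k,n}\,x^{k-1}\Bigr)\,d\mu(x).
\end{equation*}
On the support $\abs{x}=1$ the integrand has modulus $\bigl\lvert\sum_{k=1}^{n}\beta_{k,n}x^{k}\bigr\rvert\le1$ by the previous step, so $\abs{A_{n}}\le\int d\mu=1=\abs{C_{1}}$, which is the desired estimate.

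The coefficient comparison and the Carath\'eodory bound are routine; the main obstacle is the convex representation, i.e. identifying the maps $z/(1-xz)$, $\abs{x}=1$, as the extreme points of the compact convex family of normalized convex functions and applying Choquet's theorem (equivalently, quoting the Brickman--MacGregor--Wilken determination of these extreme points). It is precisely here that the convexity hypothesis on $r$ enters in an essential way: without it the rotations $z/(1-xz)$ would have to be replaced by functions whose coefficients are no longer unimodular powers of $x$, and the clean bound $\abs{A_{n}}\le\abs{C_{1}}$ would fail.
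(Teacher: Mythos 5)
Your proof is correct, but you should know that the paper itself offers no proof of this lemma: it is quoted directly from Rogosinski \cite{r}, so the relevant comparison is with the classical argument behind that citation, which is genuinely different from and much lighter than yours. Rogosinski fixes $n$, sets $\varepsilon=e^{2\pi i/n}$, and notes that the root-of-unity average
\begin{equation*}
s(z^{n}):=\frac{1}{n}\sum_{j=0}^{n-1}q\bigl(\varepsilon^{j}z\bigr)=A_{n}z^{n}+A_{2n}z^{2n}+\cdots
\end{equation*}
defines an analytic function $s$ on $\U$ with $s(0)=0$; since each value $q(\varepsilon^{j}z)$ lies in the convex domain $r(\U)$, so does their average, hence $s\prec r$, and writing $s=r\circ\omega$ with $\omega$ a Schwarz function gives $\abs{A_{n}}=\abs{s'(0)}=\abs{r'(0)}\,\abs{\omega'(0)}\le\abs{C_{1}}$ at once. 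Convexity enters only through closure of $r(\U)$ under averaging; no structure theory of convex mappings is needed. Your route --- factoring $q=r\circ w$, bounding $\bigl\lvert\sum_{k\le n}\beta_{k,n}x^{k}\bigr\rvert\le1$ via the Carath\'eodory coefficient lemma applied to $(1+xw)/(1-xw)$, and averaging against the representation $r(z)=\int_{\abs{x}=1}\tfrac{z}{1-xz}\,d\mu(x)$ --- is sound at every step: the normalization by $C_{1}$ preserves convexity of the image and the subordination, the coefficient identity $A_{n}=\sum_{k=1}^{n}C_{k}\beta_{k,n}$ is legitimate because $w^{k}=O(z^{k})$, and the interchange of the finite sum with $d\mu$ is harmless. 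One simplification worth noting: you do not need Choquet's theorem or the Brickman--MacGregor--Wilken extreme-point determination, since the Marx--Strohh\"acker theorem gives $\Re\bigl(r(z)/z\bigr)>1/2$ for normalized convex $r$, and the Herglotz formula applied to $2r(z)/z-1$ produces the probability measure $\mu$ directly (only this forward implication is used, so the failure of the converse is irrelevant). In short: your proof buys an explicit kernel decomposition of the coefficients, at the cost of invoking representation theory that Rogosinski's elementary averaging trick entirely avoids.
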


\vskip.20in

\section{Properties of the class $\mathcal{MS}(\alpha)$}

In this section, we will study the properties of the class $\mathcal{MS}(\alpha)$.
We begin by giving an integral representation for this class.

\vskip.05in

\begin{theorem}\label{th61}
A function $f\in\mathcal{MS}(\alpha)\; (\pi/2\leq\alpha<\pi)$ if and only if
\begin{equation}\label{61}
f(z)=z\cdot\exp\bigg[\frac{1}{2i\sin{\alpha}}\int_{0}^{z}\frac{1}{t}\log\bigg(\frac{1+e^{i\alpha}w(t)}{1+e^{-i\alpha}w(t)}\bigg)dt\bigg]
\quad \big(z\in\U\big),
\end{equation}
where $w(z)$ is a Schwarz function.
\end{theorem}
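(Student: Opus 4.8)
The plan is to use the subordination characterization of the class given in Lemma~\ref{lem1} and integrate the resulting differential relation. By Lemma~\ref{lem1}, a function $f\in\mathcal{A}$ belongs to $\mathcal{MS}(\alpha)$ if and only if
\begin{equation*}
\frac{zf'(z)}{f(z)}-1\prec F_{\alpha}(z),
\end{equation*}
so by the definition of subordination there exists a Schwarz function $w$ with
\begin{equation*}
\frac{zf'(z)}{f(z)}-1=F_{\alpha}\big(w(z)\big)=\frac{1}{2i\sin{\alpha}}\log\bigg(\frac{1+e^{i\alpha}w(z)}{1+e^{-i\alpha}w(z)}\bigg).
\end{equation*}
First I would rewrite the left-hand side as the logarithmic derivative $z\big(\log f(z)/z\big)'$, that is,
\begin{equation*}
\frac{zf'(z)}{f(z)}-1=z\cdot\frac{d}{dz}\log\frac{f(z)}{z}.
\end{equation*}

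Next I would divide through by $z$ and integrate from $0$ to $z$. Since $f\in\mathcal{A}$ is normalized with $f(z)/z\to 1$ as $z\to 0$, the function $\log\big(f(z)/z\big)$ is analytic near the origin and vanishes there, so
\begin{equation*}
\log\frac{f(z)}{z}=\int_{0}^{z}\frac{1}{t}\bigg(\frac{tf'(t)}{f(t)}-1\bigg)dt=\frac{1}{2i\sin{\alpha}}\int_{0}^{z}\frac{1}{t}\log\bigg(\frac{1+e^{i\alpha}w(t)}{1+e^{-i\alpha}w(t)}\bigg)dt.
\end{equation*}
Exponentiating and multiplying by $z$ yields exactly the representation \eqref{61}. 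For the converse, I would start from \eqref{61}, take the logarithmic derivative, and recover the subordination relation, so that Lemma~\ref{lem1} again certifies membership in $\mathcal{MS}(\alpha)$.

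The one technical point requiring care is that the integrand $\frac{1}{t}F_{\alpha}(w(t))$ has a removable singularity at $t=0$: since $w(0)=0$ and $F_{\alpha}(0)=0$ with $F_{\alpha}'(0)=1$, the quantity $F_{\alpha}(w(t))/t$ extends analytically to $t=0$, so the integral along any path in $\U$ is well defined and path-independent. I would also remark that the integral is taken along a rectifiable path from $0$ to $z$ inside $\U$; analyticity of the integrand guarantees independence of the path. I expect this verification of the removable singularity (and hence analyticity of the integrand on all of $\U$) to be the main—though routine—obstacle, as everything else is a direct integration of a logarithmic derivative.
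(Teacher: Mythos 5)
Your proposal follows essentially the same route as the paper's own proof: invoke Lemma~\ref{lem1} to replace the subordination by an identity with a Schwarz function $w$, rewrite $\frac{zf'(z)}{f(z)}-1$ as $z\frac{d}{dz}\log\frac{f(z)}{z}$, integrate from $0$ to $z$, and exponentiate to obtain \eqref{61}. Your additional remarks---the removable singularity of the integrand at $t=0$ and the sketch of the converse direction (which the paper leaves implicit)---are correct and, if anything, make the argument slightly more complete than the published one.
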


\vskip.05in

\begin{proof}
For $f\in\mathcal{MS}(\alpha)$, we know from Lemma \ref{lem1} that \eqref{18} holds. It follows that
\begin{equation}\label{62}
\frac{zf'(z)}{f(z)}-1=\frac{1}{2i\sin{\alpha}}\log\bigg(\frac{1+e^{i\alpha}w(z)}{1+e^{-i\alpha}w(z)}\bigg)  \quad (z\in\U),
\end{equation}
where the Schwarz function $w(z)$ is analytic in $\U$ with
$w(0)=0 \ {\rm and}\  |w(z)|<1 \,(z\in\U).$
We next see from \eqref{62} that
\begin{equation*}
\frac{f'(z)}{f(z)}-\frac{1}{z}=\frac{1}{2iz\sin{\alpha}}\log\bigg(\frac{1+e^{i\alpha}w(z)}{1+e^{-i\alpha}w(z)}\bigg),
\end{equation*}
which, upon integration, yields
\begin{equation}\label{63}
\log\bigg(\frac{f(z)}{z}\bigg)=\frac{1}{2i\sin{\alpha}}\int_{0}^{z}\frac{1}{t}\log\bigg(\frac{1+e^{i\alpha}w(t)}{1+e^{-i\alpha}w(t)}\bigg)dt.
\end{equation}
The assertion \eqref{61} of Theorem \ref{th61} now follows from \eqref{63}.
\end{proof}

\begin{example}
{\rm Let $w(z)=z$ in Theorem \ref{th61}. Then, the function $f_{\alpha}\in\mathcal{MS}(\alpha)\; (\pi/2\leq\alpha<\pi)$ is given by
\begin{equation*}
f_{\alpha}(z)=z\cdot\exp\bigg[\frac{1}{2i\sin{\alpha}}\int_{0}^{z}\frac{1}{t}\log\bigg(\frac{1+e^{i\alpha}t}{1+e^{-i\alpha}t}\bigg)dt\bigg] \quad \big(z\in\U\big).
\end{equation*}
}
\end{example}

Next, we give the following property concerning convolutions for the function class $\mathcal{MS}(\alpha)$.

\begin{theorem}\label{th71}
A function $f\in\mathcal{MS}(\alpha)\; (\pi/2\leq\alpha<\pi)$ if and only if
\begin{equation}\label{71}
f(z)*\bigg\{\frac{z^2}{(1-z)^2}-\frac{z}{1-z}\cdot\frac{1}{2i\sin{\alpha}}\log\bigg(\frac{1+e^{i(\theta+\alpha)}}{1+e^{i(\theta-\alpha)}}\bigg)\bigg\}\neq 0
\quad \big(z\in\U\big),
\end{equation}
where $*$ denotes the Hadamard product, $0<\theta<2\pi$ and $\theta-\alpha\neq\pi$.
\end{theorem}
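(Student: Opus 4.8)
The plan is to read \eqref{71} as the convolution (``dual set'') reformulation of the subordination supplied by Lemma \ref{lem1}, obtained through the standard device of clearing denominators and invoking the Hadamard identities $zf'(z)=f(z)*\frac{z}{(1-z)^2}$ and $f(z)=f(z)*\frac{z}{1-z}$. By Lemma \ref{lem1}, membership $f\in\mathcal{MS}(\alpha)$ is equivalent to $g(z):=\frac{zf'(z)}{f(z)}-1\prec F_\alpha(z)$, and here $g(0)=0=F_\alpha(0)$ because $f(z)=z+\cdots$. First I would reduce this subordination to a pointwise non-vanishing statement on the boundary.

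The core step is the following equivalence, which rests on the univalence of $F_\alpha$ and on the strip geometry of $\Omega_\alpha$. Since $\pi/2\le\alpha<\pi$ gives $\sin\alpha>0$, the two points $z=-e^{i\alpha}$ and $z=-e^{-i\alpha}$ of $\partial\U$ at which $F_\alpha=\infty$ are distinct, so $F_\alpha$ maps $\U$ conformally onto the full strip $\Omega_\alpha$ and sends $\partial\U$ minus these two points onto the two bounding lines of $\Omega_\alpha$. As $\mathbb{C}\setminus\partial\Omega_\alpha$ splits into exactly three components --- the strip $\Omega_\alpha$ and the two outer half-planes --- and $g(\U)$ is connected with $g(0)=0\in\Omega_\alpha$, the inclusion $g(\U)\subseteq\Omega_\alpha$ holds if and only if the analytic function $g$ omits every finite boundary value of $F_\alpha$. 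Together with the univalence criterion for subordination recalled in the introduction, this shows that $g\prec F_\alpha$ is equivalent to
\[
\frac{zf'(z)}{f(z)}-1\neq F_\alpha(e^{i\theta})=\frac{1}{2i\sin\alpha}\log\bigg(\frac{1+e^{i(\theta+\alpha)}}{1+e^{i(\theta-\alpha)}}\bigg)
\]
for all $z\in\U$ and all $\theta\in(0,2\pi)$ for which the right-hand side is finite; these are exactly the $\theta$ with $\theta-\alpha\neq\pi$ (and $\theta+\alpha\neq\pi$), the excluded directions being the prime-ends carried to the point at infinity on $\partial\Omega_\alpha$, a value $g$ can never attain.

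It then remains to turn this into the convolution in \eqref{71}. Put $C:=F_\alpha(e^{i\theta})$. Since $f$ is univalent we have $f(z)\neq0$ on $\U\setminus\{0\}$, so for $z\neq0$ the displayed inequality is equivalent to $zf'(z)-(1+C)f(z)\neq0$. Applying the two Hadamard identities and the elementary simplification $\frac{z}{(1-z)^2}-\frac{z}{1-z}=\frac{z^2}{(1-z)^2}$ gives
\[
zf'(z)-(1+C)f(z)=f(z)*\bigg\{\frac{z}{(1-z)^2}-(1+C)\frac{z}{1-z}\bigg\}=f(z)*\bigg\{\frac{z^2}{(1-z)^2}-C\,\frac{z}{1-z}\bigg\},
\]
and substituting $C=F_\alpha(e^{i\theta})$ reproduces precisely the kernel of \eqref{71}. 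Reading the equivalences in both directions yields the ``if and only if''.

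The step I expect to be the main obstacle is this reduction of the subordination to the boundary non-vanishing condition: making rigorous that $g\prec F_\alpha$ is the same as $g$ avoiding every finite value $F_\alpha(e^{i\theta})$, where the univalence of $F_\alpha$ and the fact that $\Omega_\alpha$ is bounded by two lines do the real work, and where the excluded angles corresponding to $\infty$ must be accounted for. A secondary, purely bookkeeping, matter is the trivial zero of the convolution at the origin: the factor $f(z)$ forces $f(z)*\{\cdots\}$ to vanish at $z=0$, so \eqref{71} is to be understood for $z\in\U\setminus\{0\}$, which is exactly the range in which the division by $f(z)$ above is legitimate.
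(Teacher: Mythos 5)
Your proposal is correct and takes essentially the same route as the paper's proof: Lemma \ref{lem1} reduces membership to the subordination, which is then translated into the non-equality $zf'(z)-\big(1+F_{\alpha}(e^{i\theta})\big)f(z)\neq 0$ and rewritten via the Hadamard identities $f(z)=f(z)*\frac{z}{1-z}$ and $zf'(z)=f(z)*\frac{z}{(1-z)^2}$ into \eqref{71}. You are in fact more careful than the paper, which asserts the equivalence between the subordination and the avoidance of the finite boundary values $F_{\alpha}(e^{i\theta})$ without your connectivity argument, and which overlooks both the trivial vanishing of the convolution at $z=0$ and the additional excluded angle $\theta+\alpha=\pi$.
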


\begin{proof}
Assume that $f\in\mathcal{MS}(\alpha)$. Then, by Lemma \ref{lem1}, we observe that \eqref{18} holds. This implies that
\begin{equation}\label{72}
\frac{zf'(z)}{f(z)}\neq
1+\frac{1}{2i\sin{\alpha}}\log\bigg(\frac{1+e^{i\alpha}e^{i\theta}}{1+e^{-i\alpha}e^{i\theta}}\bigg)
\quad \big(0<\theta<2\pi,\ \theta-\alpha\neq\pi;\ z\in\U\big).
\end{equation}
The condition \eqref{72} can now be written as follows:
\begin{equation}\label{73}
zf'(z)-\left[1+\frac{1}{2i\sin{\alpha}}\log\bigg(\frac{1+e^{i(\theta+\alpha)}}{1+e^{i(\theta-\alpha)}}\bigg)\right]f(z)\neq 0
\quad \big(0<\theta<2\pi,\ \theta-\alpha\neq\pi;\ z\in\U\big).
\end{equation}
We note that
\begin{equation}\label{74}
f(z)=f(z)*\left(\frac{z}{1-z}\right)  \ {\rm and} \  zf'(z)=f(z)*\left(\frac{z}{(1-z)^2}\right).
\end{equation}
Thus, by virtue of \eqref{73} and \eqref{74}, we obtain the assertion \eqref{71} of Theorem \ref{th71}.
\end{proof}

We now derive a sufficient condition involving subordination for the functions to be in the class $\mathcal{MS}(\alpha)$.

\begin{theorem}\label{thsub1}
Let $f\in\mathcal{A}$ and satisfy the following subordination
\begin{equation}\label{sub1}
\bigg(1+\frac{zf''(z)}{f'(z)}\bigg)\prec 1+F_{\alpha}(z)  \quad (z\in\U).
\end{equation}
Then
\begin{equation}\label{sub2}
\frac{zf'(z)}{f(z)}\prec 1+F_{\alpha}(z)  \quad (z\in\U),
\end{equation}
that is, $f\in\mathcal{MS}(\alpha)$, where $F_{\alpha}$ is given by \eqref{13}.
\end{theorem}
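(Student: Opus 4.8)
The plan is to read the hypothesis as a first-order differential subordination of Miller--Mocanu type and to invoke Lemma \ref{lem2} with $h(z)=1+F_{\alpha}(z)$ and $q(z)=zf'(z)/f(z)$. First I would record that $q$ is admissible: since $f\in\mathcal{A}$ is univalent, $f$ vanishes only at the origin (and simply), so $q$ is analytic in $\U$, and the expansion $f(z)=z+\cdots$ gives $q(0)=1$. As $F_{\alpha}(0)=0$, we have $h(0)=1=q(0)$, so the normalization $q(0)=h(0)$ required by Lemma \ref{lem2} holds.

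The decisive identity is obtained by logarithmic differentiation of $q=zf'/f$, which yields
\[
\frac{zq'(z)}{q(z)}=1+\frac{zf''(z)}{f'(z)}-\frac{zf'(z)}{f(z)},
\]
and hence
\[
q(z)+\frac{zq'(z)}{q(z)}=1+\frac{zf''(z)}{f'(z)}.
\]
Choosing $\beta=1$ and $\gamma=0$, the left-hand side is exactly $q(z)+zq'(z)/\bigl(\beta q(z)+\gamma\bigr)$, so hypothesis \eqref{sub1} states precisely that this expression is subordinate to $h=1+F_{\alpha}$.

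It then remains to check the structural assumptions on $h$. Since $F_{\alpha}$ is convex univalent in $\U$ (as noted after \eqref{14}), its translate $h=1+F_{\alpha}$ is convex univalent too. With $\beta=1$, $\gamma=0$ we have $\Re\bigl(\beta h(z)+\gamma\bigr)=1+\Re F_{\alpha}(z)$, and because $F_{\alpha}(\U)=\Omega_{\alpha}$ lies in the half-plane $\Re\omega>(\alpha-\pi)/(2\sin\alpha)$, the inequalities \eqref{k} give
\[
1+\Re F_{\alpha}(z)>1+\frac{\alpha-\pi}{2\sin\alpha}\geq 1-\frac{\pi}{4}>0.
\]
Thus $\Re\bigl(\beta h(z)+\gamma\bigr)\geq 0$ on $\U$, and all hypotheses of Lemma \ref{lem2} are satisfied.

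Applying Lemma \ref{lem2} gives $q(z)\prec h(z)$, i.e. $zf'(z)/f(z)\prec 1+F_{\alpha}(z)$, which is \eqref{sub2}; equivalently $zf'(z)/f(z)-1\prec F_{\alpha}(z)$, so $f\in\mathcal{MS}(\alpha)$ by Lemma \ref{lem1}. The only genuine obstacle is the positivity condition $\Re(\beta h+\gamma)\geq 0$: everything hinges on the fact that the strip $\Omega_{\alpha}$ stays to the right of $\Re\omega=-1$, which is guaranteed by the bound $1-\pi/4>0$ recorded in \eqref{k}. The remaining verifications are routine.
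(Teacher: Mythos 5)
Your proposal is correct and is essentially the paper's own proof: the paper applies Lemma \ref{lem2} to $p(z)=zf'(z)/f(z)-1$ with $h=F_{\alpha}$ and $(\beta,\gamma)=(1,1)$, which is exactly your argument after translating by $1$, i.e.\ your choice $q=zf'(z)/f(z)$, $h=1+F_{\alpha}$, $(\beta,\gamma)=(1,0)$, and the positivity condition $\Re\big(1+F_{\alpha}(z)\big)>0$ is the same in both versions. Your explicit verification of that positivity via \eqref{k} is a small welcome addition, since the paper merely asserts it.
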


\begin{proof}
We consider the function $p(z)$ such that
\begin{equation}\label{sub3}
p(z)+1=\frac{zf'(z)}{f(z)} \quad (z\in\U).
\end{equation}
Therefore,
\begin{equation*}
\log\big(p(z)+1\big)+\log\frac{f(z)}{z}=\log f'(z).
\end{equation*}
We have
\begin{equation*}
\frac{p'(z)}{p(z)+1}+\frac{f'(z)}{f(z)}-\frac{1}{z}=\frac{f''(z)}{f'(z)},
\end{equation*}
or
\begin{equation*}
\frac{zp'(z)}{p(z)+1}+\frac{zf'(z)}{f(z)}-1=\frac{zf''(z)}{f'(z)}.
\end{equation*}
From \eqref{sub1} and \eqref{sub3}, we have
\begin{equation}\label{sub4}
\frac{zp'(z)}{p(z)+1}+p(z)=\frac{zf''(z)}{f'(z)}\prec F_{\alpha}(z).
\end{equation}
We note that
\begin{equation}\label{sub5}
p(0)=0=F_{\alpha}(0) \ {\rm and }  \  \Re\big(1+F_{\alpha}(z)\big)>0 \quad (\pi/2\leq \alpha<\pi;\ z\in\U).
\end{equation}
Moreover, by \eqref{sub4} and \eqref{sub5} in Lemma \ref{lem2}, we have
\begin{equation*}
p(z)\prec F_{\alpha}(z),
\end{equation*}
or by \eqref{sub3}, we have
\begin{equation*}
\frac{zf'(z)}{f(z)}\prec 1+F_{\alpha}(z)  \quad (z\in\U).
\end{equation*}
Therefore, by Lemma \ref{lem1}, we obtain that $f\in\mathcal{MS}(\alpha)$.
\end{proof}

\begin{remark}
{\rm
It is well known that $\mathcal{K}\subset\mathcal{S}^{*}(1/2)$. In view of \eqref{k} and Theorem \ref{thsub1}, we can obtain
$\mathcal{K}\big(\Phi(\alpha)\big)\subset\mathcal{S}^{*}\big(\Phi(\alpha)\big)$ for $\pi/2\leq\alpha<\pi$, where $\Phi(\alpha)=1+(\alpha-\pi)/(2\sin\alpha)$.
}
\end{remark}

Now, we present the bounds on the coefficients for functions of the class $\mathcal{MS}(\alpha)$.
The basic method of proof is similar to that used in \cite[Theorem 3.1]{sunyong3}.

\begin{theorem}\label{th21}
Let $f(z)=z+\sum_{n=2}^{\infty}a_{n}z^{n}\in\mathcal{MS}(\alpha)$. Then
$$|a_n|\leq 1 \quad (n\in\mathbb{N}).$$
\end{theorem}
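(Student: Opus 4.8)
The plan is to combine the subordination characterization of Lemma~\ref{lem1} with the coefficient estimate of Lemma~\ref{lem3}, and then to run an induction through the logarithmic-derivative recurrence, exactly in the spirit of \cite[Theorem 3.1]{sunyong3}.

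First I would set
\begin{equation*}
q(z)=\frac{zf'(z)}{f(z)}-1=\sum_{n=1}^{\infty}c_{n}z^{n},
\end{equation*}
which is analytic in $\U$ with $q(0)=0$. By Lemma~\ref{lem1}, the hypothesis $f\in\mathcal{MS}(\alpha)$ is equivalent to $q(z)\prec F_{\alpha}(z)$, where $F_{\alpha}$ is given by \eqref{13}. Since $F_{\alpha}$ is convex univalent in $\U$ and its expansion $F_{\alpha}(z)=z+\sum_{n\geq 2}B_{n}(\alpha)z^{n}$ has leading coefficient $B_{1}(\alpha)=1$ by \eqref{16}, Lemma~\ref{lem3} applies with $r=F_{\alpha}$ and yields the \emph{uniform} bound $|c_{n}|\leq |B_{1}(\alpha)|=1$ for every $n\in\mathbb{N}$.

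Next I would convert the identity $q(z)+1=zf'(z)/f(z)$ into a recurrence for the $a_{n}$. Clearing the denominator gives $zf'(z)=\bigl(1+q(z)\bigr)f(z)$, and comparing the coefficients of $z^{n}$ on both sides (with the convention $a_{1}=1$) produces
\begin{equation*}
(n-1)a_{n}=\sum_{j=1}^{n-1}a_{j}\,c_{n-j}\qquad(n\in\mathbb{N}).
\end{equation*}
The purpose of this identity is that it expresses each $a_{n}$ through the earlier coefficients $a_{1},\dots,a_{n-1}$, weighted by the quantities $c_{n-j}$ that are all bounded by $1$.

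Finally I would argue by induction on $n$. The base case $|a_{1}|=1$ is immediate. Assuming $|a_{j}|\leq 1$ for $1\leq j\leq n-1$, the triangle inequality applied to the recurrence gives
\begin{equation*}
(n-1)|a_{n}|\leq \sum_{j=1}^{n-1}|a_{j}|\,|c_{n-j}|\leq n-1,
\end{equation*}
whence $|a_{n}|\leq 1$, which completes the induction. I expect the only genuinely delicate point to be the recognition that Lemma~\ref{lem3} furnishes a bound independent of $n$, namely $|c_{n}|\leq|B_{1}(\alpha)|=1$ rather than the individual (and uncontrolled) values $|B_{n}(\alpha)|$; once this uniform estimate is in hand, the convolution recurrence together with the induction closes routinely.
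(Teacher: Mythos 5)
Your proposal is correct and follows essentially the same route as the paper's own proof: both apply Lemma~\ref{lem3} to the subordination $zf'(z)/f(z)-1\prec F_{\alpha}(z)$ from Lemma~\ref{lem1} to get the uniform bound $|c_n|\le|B_1(\alpha)|=1$, derive the same recurrence $(n-1)a_n=\sum_{j=1}^{n-1}a_jc_{n-j}$ from $zf'(z)=(1+q(z))f(z)$, and close by induction. The only differences are cosmetic (you normalize $q$ to vanish at the origin rather than take $q=zf'/f$, and you write the recurrence as a single convolution sum).
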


\begin{proof}
For given $\alpha\ (\pi/2\leq\alpha<\pi)$, we define the functions $q(z)$ and $p(z)$ by
\begin{equation}\label{24}
q(z)=\frac{zf'(z)}{f(z)} \quad (z\in\U)
\end{equation}
and
\begin{equation}\label{25}
p_{\alpha}(z)=1+\frac{1}{2i\sin{\alpha}}\log\bigg(\frac{1+e^{i\alpha}z}{1+e^{-i\alpha}z}\bigg)   \quad \big(z\in\U\big).
\end{equation}
Then the subordination \eqref{18} can be written as follows:
\begin{equation}\label{26}
q(z)\prec p_{\alpha}(z) \quad (z\in\U).
\end{equation}
We note that the function $p(z)$ defined by \eqref{25} is convex in $\U$ and has the form:
\begin{equation*}\label{27}
p_{\alpha}(z)=1+\sum_{n=1}^{\infty}B_{n}(\alpha)z^{n}  \quad \big(z\in\U\big),
\end{equation*}
where $B_{n}(\alpha)$ is given by \eqref{16}. If we let
\begin{equation*}\label{29}
q(z)=1+\sum_{n=1}^{\infty}A_{n}z^{n}  \quad (z\in\U),
\end{equation*}
then, by Lemma \ref{lem3}, we see that the subordination \eqref{26} implies that
\begin{equation}\label{210}
|A_n|\leq |B_1|=1 \quad (n\in\mathbb{N}).
\end{equation}

Now, \eqref{24} implies that
\begin{equation*}
zf'(z)=f(z)q(z) \quad (z\in\U).
\end{equation*}
Then, by equating the coefficients of $z^n$ on both sides, we get
\begin{equation*}
a_n=\frac{1}{n-1}\big(A_{n-1}+a_{2}A_{n-2}+a_{3}A_{n-3}+\cdots+a_{n-1}A_{1}\big) \quad \big(n\in\mathbb{N}\setminus\{1\}\big).
\end{equation*}
A simple calculation combined with the inequality \eqref{210} yields $|a_2|=|A_1|\leq 1$ and
\begin{equation*}\label{2011}
\begin{split}
|a_n|&=\frac{1}{n-1}\big|A_{n-1}+a_{2}A_{n-2}+a_{3}A_{n-3}+\cdots+a_{n-1}A_{1}\big|\\
&\leq\frac{1}{n-1}\big(|A_{n-1}|+|a_{2}|\cdot|A_{n-2}|+|a_{3}|\cdot|A_{n-3}|+\cdots+|a_{n-1}|\cdot|A_{1}|\big)\\
&\leq\frac{|B_1|}{n-1}\bigg(1+\sum_{k=2}^{n-1}|a_{k}|\bigg)=\frac{1}{n-1}\bigg(1+\sum_{k=2}^{n-1}|a_{k}|\bigg) \quad \big(n\in\mathbb{N}\setminus\{1,2\}\big).
\end{split}
\end{equation*}
To prove the assertion of Theorem \ref{th21}, we need to show that
\begin{equation}\label{211}
|a_n|\leq \frac{1}{n-1}\bigg(1+\sum_{k=2}^{n-1}|a_{k}|\bigg)\leq 1  \quad \big(n\in\mathbb{N}\setminus\{1,2\}\big).
\end{equation}
We now use mathematical induction to prove \eqref{211}. For $n=3$, we have
\begin{equation*}
|a_3|\leq \frac{1}{2}\big(1+|a_{2}|\big)\leq \frac{1}{2}\big(1+|A_{1}|\big)\leq 1.
\end{equation*}
Then suppose that the inequality \eqref{211} is true for $3\leq n\leq m$. We prove the statement for $n=m+1$. Straightforward calculations yield
\begin{equation*}
\begin{split}
|a_{m+1}|
&\leq \frac{1}{m}\bigg(1+\sum_{k=2}^{m}|a_{k}|\bigg)\leq \frac{1}{m}\big[1+(m-1)\big]=1,
\end{split}
\end{equation*}
which implies that the inequality \eqref{211} is true for $n=m+1$.
\end{proof}

\section{Radius problems and inclusion relations}

In this section, we first give results on the radius problem involving the function class $\mathcal{MS}(\alpha)$.
As an application, we obtain inclusion relations for the class $\mathcal{MS}(\alpha)$ and the other well-known function classes.
The basic method of proof in the following theorem is similar to that used in \cite[Theorem 5]{sjr2017} (see also \cite[Theorem 3.1]{kscs}).
\vskip.05in

\begin{theorem}\label{th81}
Let $f\in\mathcal{MS}(\alpha)$. Then, for each $z\
(|z|=r<1)$,
\begin{equation}\label{81}
1+\frac{1}{2\sin{\alpha}}\big[M_{1}(r,\alpha)-M_{2}(r,\alpha)\big]
\leq \Re\left(\frac{zf'(z)}{f(z)}\right) \leq
1+\frac{1}{2\sin{\alpha}}\big[M_{1}(r,\alpha)+M_{2}(r,\alpha)\big]
\end{equation}
and
\begin{equation}\label{82}
\bigg|\Im\left(\frac{zf'(z)}{f(z)}\right)\bigg| \leq \frac{1}{2\sin{\alpha}}\log\big[N(r,\alpha)\big],
\end{equation}
where
\begin{equation}\label{83}
M_{1}(r,\alpha)=\arcsin\bigg(\frac{-r^{2}\sin{2\alpha}}{\sqrt{1-2r^{2}\cos{2\alpha}+r^{4}}}\bigg),
\end{equation}

\begin{equation}\label{84}
M_{2}(r,\alpha)=\arcsin\bigg(\frac{2r\sin{\alpha}}{\sqrt{1-2r^{2}\cos{2\alpha}+r^{4}}}\bigg),
\end{equation}

\begin{equation}\label{85}
N(r,\alpha)=\frac{\sqrt{1-2r^{2}\cos{2\alpha}+r^{4}}+2r\sin\alpha}{1-r^{2}}.
\end{equation}
\end{theorem}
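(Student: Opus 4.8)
The plan is to reduce everything to a study of the image of the disk $\{|\zeta|\le r\}$ under the Möbius--logarithm structure of $F_\alpha$. By Lemma \ref{lem1} the membership $f\in\mathcal{MS}(\alpha)$ means there is a Schwarz function $w$ with $\frac{zf'(z)}{f(z)}=1+F_\alpha\big(w(z)\big)$, and for $|z|=r$ the Schwarz lemma gives $|w(z)|\le r$. Hence it suffices to bound the real and imaginary parts of $1+F_\alpha(\zeta)$ as $\zeta$ runs over the closed disk $|\zeta|\le r$. The key observation is that, writing $u=u(\zeta)=\frac{1+e^{i\alpha}\zeta}{1+e^{-i\alpha}\zeta}$, one has
\[
F_{\alpha}(\zeta)=\frac{1}{2i\sin\alpha}\log u=\frac{\arg u}{2\sin\alpha}-i\,\frac{\log|u|}{2\sin\alpha},
\]
so that $\Re\left(\frac{zf'(z)}{f(z)}\right)=1+\frac{\arg u}{2\sin\alpha}$ and $\Im\left(\frac{zf'(z)}{f(z)}\right)=-\frac{\log|u|}{2\sin\alpha}$. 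Thus the real-part estimate \eqref{81} is governed entirely by the range of $\arg u$, and the imaginary-part estimate \eqref{82} by the range of $|u|$.

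First I would compute the image of $\{|\zeta|\le r\}$ under the Möbius map $u$. Inverting $u$ and imposing $|\zeta|^2=r^2$ gives the equation of a circle; completing the square yields a disk $D$ with
\[
u_{0}=\frac{(1-r^{2}\cos 2\alpha)-i\,r^{2}\sin 2\alpha}{1-r^{2}},\qquad R=\frac{2r\sin\alpha}{1-r^{2}}.
\]
Evaluating at $\zeta=0$, where $u=1$, confirms that $\{|\zeta|\le r\}$ maps onto the closed disk $\overline{D}$. The algebraic identity $|u_0|^2-R^2=1$ (equivalently $(1-r^2)^2>0$) then shows $0\notin\overline{D}$, so that $\arg u$ and $\log|u|$ are single-valued and continuous on $\overline{D}$; throughout I keep $\sin\alpha>0$ for $\pi/2\le\alpha<\pi$ to fix signs.

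Next I would extract the extrema. Since the origin lies strictly outside $\overline{D}$, the range of $\arg u$ over $\overline{D}$ is $\big[\arg u_0-\arcsin(R/|u_0|),\ \arg u_0+\arcsin(R/|u_0|)\big]$, read off from the two tangent lines from the origin to $\partial D$. A direct computation, using $|u_0|=\frac{\sqrt{1-2r^{2}\cos 2\alpha+r^{4}}}{1-r^{2}}$, identifies $\arg u_0=M_{1}(r,\alpha)$ and $\arcsin(R/|u_0|)=M_{2}(r,\alpha)$, giving \eqref{81}. For the imaginary part, the range of $|u|$ over $\overline{D}$ is the interval $\big[\,|u_0|-R,\ |u_0|+R\,\big]$; here $|u_0|+R=N(r,\alpha)$, and the same identity $|u_0|^2-R^2=1$ forces $|u_0|-R=1/N(r,\alpha)$. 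Therefore $\log|u|\in[-\log N,\log N]$ and $\big|\Im(zf'/f)\big|\le\frac{1}{2\sin\alpha}\log N$, which is \eqref{82}.

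The main obstacle is the geometric and algebraic bookkeeping of the second and third paragraphs: correctly deriving the center $u_0$ and radius $R$ of the image disk, and then matching $\arg u_0$, $\arcsin(R/|u_0|)$, and $|u_0|+R$ to the prescribed quantities $M_1$, $M_2$, $N$. Once the reciprocal relation $|u_0|-R=1/N$ and the position of $u_0$ relative to the origin are pinned down, the extremal values of $\arg u$ and $|u|$ over a disk not containing the origin are elementary, and the two estimates follow at once.
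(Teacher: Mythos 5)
Your proposal is correct and takes essentially the same approach as the paper: both locate $Q(z)=\frac{1+e^{i\alpha}w(z)}{1+e^{-i\alpha}w(z)}$ in the same disk of center $u_0=C$ and radius $R=\frac{2r\sin\alpha}{1-r^2}$, use the identity $|u_0|^2-R^2=1$ to place the origin outside it and to get $|u_0|-R=1/N(r,\alpha)$, and then read off the extrema of $\arg Q$ (via the tangent lines, giving $M_1\pm M_2$) and of $\log|Q|$ from the decomposition $\frac{zf'(z)}{f(z)}=1+\frac{\arg Q}{2\sin\alpha}-\frac{i\log|Q|}{2\sin\alpha}$. The only cosmetic difference is that you derive the image disk by inverting the M\"obius map directly, whereas the paper squares the Schwarz-lemma inequality $|Q-1|\leq |e^{i\alpha}-e^{-i\alpha}Q|\,r$; the resulting computation is identical.
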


\begin{proof}
Suppose that $f\in\mathcal{MS}(\alpha)$. Then, by Lemma \ref{lem1}, the assertion \eqref{18} holds.
Thus, by the definition of subordination, there exists a Schwarz function
$w(z)$ such that
\begin{equation*}
\frac{zf'(z)}{f(z)}
=1+\frac{1}{2i\sin{\alpha}}\log\bigg(\frac{1+e^{i\alpha}w(z)}{1+e^{-i\alpha}w(z)}\bigg)
\quad (z\in\U).
\end{equation*}
We put
\begin{equation*}
Q(z)=\frac{1+e^{i\alpha}w(z)}{1+e^{-i\alpha}w(z)} \quad (z\in\U),
\end{equation*}
which readily yields
\begin{equation*}
Q(z)-1=\big[e^{i\alpha}-e^{-i\alpha}Q(z)\big]w(z).
\end{equation*}
For $|z|=r<1$, using the Schwarz lemma,
$$\abs{w(z)}\leq \abs{z} \quad (z\in\U),$$
we find that
\begin{equation}\label{88}
|Q(z)-1|\leq \big|e^{i\alpha}-e^{-i\alpha}Q(z)\big|r \quad (|z|=r<1).
\end{equation}
If we set $Q(z)=u+iv$, then, upon squaring both sides of \eqref{88}, we get
\begin{equation}\label{89}
\bigg(u-\frac{1-r^2\cos2\alpha}{1-r^2}\bigg)^2+\bigg(v+\frac{r^2\sin2\alpha}{1-r^2}\bigg)^2
\leq \bigg(\frac{2r\sin\alpha}{1-r^2}\bigg)^2.
\end{equation}
Thus, $Q(z)$ maps the disk
$$\overline{\U_{r}}=\{z: z\in\mathbb{C} \ \text{and} \ |z|\leq r<1\}$$
onto the disk which the center $C$ and radius $R$ are given by
\begin{equation}\label{circle89}
C:=\left(\frac{1-r^2\cos2\alpha}{1-r^2},-\frac{r^2\sin2\alpha}{1-r^2}\right),
\quad
R:=\frac{2r\sin\alpha}{1-r^2},
\end{equation}
respectively.

We observe that
\begin{equation*}
X_{C}:=\frac{1-r^2\cos2\alpha}{1-r^2}>0, \ Y_{C}:=-\frac{r^2\sin2\alpha}{1-r^2}>0, \ R=\frac{2r\sin\alpha}{1-r^2}>0  \quad (\pi/2\leq \alpha<\pi)
\end{equation*}
and
\begin{equation*}
\big|\overrightarrow{OC}\big|^{2}-R^{2}=1>0, \ Y_{C}-R=\frac{2r\sin\alpha(1-\cos\alpha)}{1-r^2}<0  \quad (\pi/2\leq \alpha<\pi).
\end{equation*}
Hence, the origin $O$ lies outside of the disk \eqref{circle89}, and the disk \eqref{circle89} lies in the first and the forth quadrants of $uv$-plane.

\vskip.05in

We can obtain the upper and the lower bounds of $|Q(z)|$:

\begin{equation}\label{810}
|Q(z)|\leq \big|\overrightarrow{OC}\big|+R=\frac{\sqrt{1-2r^{2}\cos{2\alpha}+r^{4}}+2r\sin\alpha}{1-r^{2}}=:N(r,\alpha)
\end{equation}
and
\begin{equation}\label{811}
|Q(z)|\geq\big|\overrightarrow{OC}\big|-R=\frac{\sqrt{1-2r^{2}\cos{2\alpha}+r^{4}}-2r\sin\alpha}{1-r^{2}}=\frac{1}{N(r,\alpha)},
\end{equation}
where $N(r,\alpha)>1$ is already given by \eqref{85}.

Furthermore, a simple geometric observation shows that \eqref{89} implies
\begin{equation}\label{812}
M_{1}(r,\alpha)-M_{2}(r,\alpha)  \leq \arg\big(Q(z)\big)\leq  M_{1}(r,\alpha)+M_{2}(r,\alpha),
\end{equation}
where $M_{1}(r,\alpha)$ and $M_{2}(r,\alpha)$ are given by \eqref{83} and \eqref{84}, respectively.

For $|z|=r<1$, we have
\begin{equation}\label{813}
\begin{split}
\frac{zf'(z)}{f(z)}
&=1+\frac{1}{2i\sin{\alpha}}\log\big(Q(z)\big)\\
&=1+\frac{1}{2i\sin{\alpha}}\big[\log\big|Q(z)\big|+i\arg\big(Q(z)\big)\big]\\
&=1+\frac{1}{2\sin{\alpha}}\arg\big(Q(z)\big)-\frac{i}{2\sin{\alpha}}\log\big|Q(z)\big|.
\end{split}
\end{equation}
Thus, by virtue of \eqref{810}-\eqref{813}, we easily get the assertions \eqref{81} and \eqref{82} of Theorem \ref{th81}.
\end{proof}

The following identities are used in the proofs of our main results:
\begin{equation}\label{r0}
\lim_{r\rightarrow 0+}M_{1}(r,\alpha)=\lim_{r\rightarrow 0+}M_{2}(r,\alpha)=0, \quad \lim_{r\rightarrow 0+}N(r,\alpha)=1,
\end{equation}
and
\begin{equation}\label{r1}
\lim_{r\rightarrow 1-}M_{1}(r,\alpha)=\frac{3\pi}{2}-\alpha, \ \lim_{r\rightarrow 1-}M_{2}(r,\alpha)=\frac{\pi}{2}, \ \lim_{r\rightarrow 1-}N(r,\alpha)=+\infty,
\end{equation}
where $M_{1}(r,\alpha)$, $M_{2}(r,\alpha)$ and $N(r,\alpha)$ are given by \eqref{83}, \eqref{84} and \eqref{85}, respectively.

By using Theorem \ref{th81}, we derive the following inclusion relations for the class $\mathcal{MS}(\alpha)$.

\vskip.05in

\begin{theorem}\label{th31}
Assume that
\begin{equation*}
\frac{\pi}{2}\leq\alpha<\pi \ {\rm and} \ 0\leq\gamma<1.
\end{equation*}
Then
\begin{equation*}
\mathcal{MS}(\alpha)\subset\mathcal{SS}(\gamma) \quad (|z|\leq r_{1}),
\end{equation*}
where $r_{1}\in(0,1)$ is the least positive root of the following equation:
\begin{equation*}
\arctan\bigg(\frac{\log\big[N(r,\alpha)\big]}{2\sin{\alpha}+M_{1}(r,\alpha)-M_{2}(r,\alpha)}\bigg)
-\frac{\pi}{2}\gamma=0 \quad \big(0\leq r<1\big),
\end{equation*}
where $M_{1}(r,\alpha)$, $M_{2}(r,\alpha)$ and $N(r,\alpha)$ are given by \eqref{83}, \eqref{84} and \eqref{85}, respectively.
\end{theorem}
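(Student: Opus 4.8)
The plan is to recast the strongly-starlike condition as a bound on the argument of $zf'(z)/f(z)$ and then feed in the real- and imaginary-part estimates already proved in Theorem \ref{th81}. Since $f\in\mathcal{SS}(\gamma)$ means precisely that $\left|\arg\left(zf'(z)/f(z)\right)\right|\leq\frac{\pi}{2}\gamma$, the inclusion $\mathcal{MS}(\alpha)\subset\mathcal{SS}(\gamma)$ on $|z|\leq r_{1}$ is equivalent to verifying this one inequality throughout that disk. So I would fix $f\in\mathcal{MS}(\alpha)$ and a point $z$ with $|z|=r<1$, write $zf'(z)/f(z)=U+iV$, and invoke \eqref{81}--\eqref{82}: the point $U+iV$ then lies in the rectangle determined by $U_{\min}(r)\leq U\leq U_{\max}(r)$ and $|V|\leq\frac{1}{2\sin\alpha}\log[N(r,\alpha)]$, where $U_{\min}(r)=1+\frac{1}{2\sin\alpha}\big(M_{1}(r,\alpha)-M_{2}(r,\alpha)\big)$. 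A preliminary point to secure is that $U_{\min}(r)>0$, i.e. $2\sin\alpha+M_{1}(r,\alpha)-M_{2}(r,\alpha)>0$, which keeps the rectangle in the right half-plane and makes the argument single-valued; this follows from the inclusion $\mathcal{MS}(\alpha)\subset\mathcal{S}^{*}$ together with the sharpness of the envelope in \eqref{81}, and is consistent with \eqref{k} and the endpoint values \eqref{r1}.

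Because $\arctan$ is increasing and $\left|\arg(U+iV)\right|=\left|\arctan(V/U)\right|\leq\arctan\big(|V|/U\big)$ whenever $U>0$, the modulus of the argument over the rectangle is largest where $U$ attains its least value $U_{\min}(r)$ and $|V|$ its greatest value $\frac{1}{2\sin\alpha}\log[N(r,\alpha)]$. Evaluating there and cancelling the common factor $\frac{1}{2\sin\alpha}$ yields the clean estimate
\[
\left|\arg\left(\frac{zf'(z)}{f(z)}\right)\right|\leq \arctan\left(\frac{\log[N(r,\alpha)]}{2\sin\alpha+M_{1}(r,\alpha)-M_{2}(r,\alpha)}\right)=:G(r,\alpha),
\]
so the strongly-starlike condition at a point with $|z|=r$ holds as soon as $G(r,\alpha)\leq\frac{\pi}{2}\gamma$.

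It remains to analyse the scalar function $\Psi(r):=G(r,\alpha)-\frac{\pi}{2}\gamma$ on $[0,1)$. By \eqref{r0} one has $M_{1},M_{2}\to0$ and $N\to1$, hence $G(0^{+},\alpha)=\arctan 0=0$ and $\Psi(0^{+})=-\frac{\pi}{2}\gamma\leq0$ (strictly negative when $\gamma>0$). By \eqref{r1}, $\log[N(r,\alpha)]\to+\infty$ while the denominator tends to a finite positive limit, so the ratio inside the arctangent tends to $+\infty$, $G(r,\alpha)\to\frac{\pi}{2}$, and $\Psi(1^{-})=\frac{\pi}{2}(1-\gamma)>0$. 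Continuity of $\Psi$ and the intermediate value theorem then produce a least positive root $r_{1}\in(0,1)$, which is exactly the root of the equation displayed in the statement. Since $\Psi$ starts non-positive and $r_{1}$ is its first zero, $\Psi(r)\leq0$ for all $r\in[0,r_{1}]$; consequently, for every $z$ with $|z|=r\leq r_{1}$ we get $\left|\arg(zf'(z)/f(z))\right|\leq G(r,\alpha)\leq\frac{\pi}{2}\gamma$, i.e. $f\in\mathcal{SS}(\gamma)$ on $|z|\leq r_{1}$, as claimed.

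The step I expect to be most delicate is this last one: pinning down the endpoint behaviour of $G(\cdot,\alpha)$ through the limit identities \eqref{r0}--\eqref{r1} to locate $r_{1}$, together with the positivity $2\sin\alpha+M_{1}-M_{2}>0$ needed in the corner estimate. Establishing that $G(\cdot,\alpha)$ is monotone increasing in $r$ would upgrade ``least root'' to ``unique root'' and sharpen the radius, but it is not required: the sign of $\Psi$ at the origin together with continuity already guarantees $\Psi\leq0$ up to the first crossing.
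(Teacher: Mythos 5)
Your proposal is correct and follows essentially the same route as the paper: both bound $\left|\arg\big(zf'(z)/f(z)\big)\right|$ via the real- and imaginary-part estimates \eqref{81}--\eqref{82} of Theorem \ref{th81} (your explicit rectangle/corner estimate is exactly what the paper's one-line arctangent bound encodes, using the same positivity of $2\sin\alpha+M_{1}-M_{2}$, which the paper simply asserts), and then both apply the limit identities \eqref{r0}--\eqref{r1} with the intermediate value theorem to the same function $G(r)$ (your $\Psi$) to extract the least root $r_{1}$ and conclude $G(r)<0$ for $r<r_{1}$. The only cosmetic difference is your vaguer justification of the denominator's positivity via ``sharpness of the envelope,'' where a direct check (or the paper's assertion) suffices; this does not affect the argument.
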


\begin{proof}
We first note that
\begin{equation*}
1+\frac{1}{2\sin{\alpha}}\big[M_{1}(r,\alpha)-M_{2}(r,\alpha)\big]>0
\quad \big(\pi/2\leq\alpha<\pi;\; 0\leq r<1\big).
\end{equation*}
Hence, by Theorem \ref{th81}, for $f\in\mathcal{MS}(\alpha)$, we have
\begin{equation*}
\left|\arg\left(\frac{zf'(z)}{f(z)}\right)\right|
\leq\arctan\bigg(\frac{\frac{1}{2\sin{\alpha}}\log\big[N(r,\alpha)\big]}
{1+\frac{1}{2\sin{\alpha}}\big[M_{1}(r,\alpha)-M_{2}(r,\alpha)\big]}\bigg).
\end{equation*}
Thus, for the function $f\in\mathcal{SS}(\gamma)$, it suffices to prove the following inequality:
\begin{equation*}
\arctan\bigg(\frac{\log\big[N(r,\alpha)\big]}{2\sin{\alpha}+M_{1}(r,\alpha)-M_{2}(r,\alpha)}\bigg)
-\frac{\pi}{2}\gamma<0.
\end{equation*}
We now define a continuous function $G(r)$ by
\begin{equation*}
G(r)=\arctan\bigg(\frac{\log\big[N(r,\alpha)\big]}{2\sin{\alpha}+M_{1}(r,\alpha)-M_{2}(r,\alpha)}\bigg)
-\frac{\pi}{2}\gamma  \quad  (0\leq r<1).
\end{equation*}
In view of \eqref{r0} and \eqref{r1}, we can show that
\begin{equation*}
G(0)=-\frac{\pi}{2}\gamma<0 \ {\rm and} \ \lim_{r\rightarrow1-}G(r)=\frac{\pi}{2}-\frac{\pi}{2}\gamma>0.
\end{equation*}
Thus, the equation $G(r)=0$ has a solution in $(0,1)$. Let $r_1\in(0,1)$ be the least positive root of $G(r)=0$.
Then $G(r)<0$ for all $r<r_{1}$. Hence, $f$ is a strongly starlike function of order $\gamma$ for $z\ (|z|\leq r_{1})$.
\end{proof}

\begin{theorem}\label{th41}
Let $\pi/2\leq\alpha<\pi$. Then
\begin{equation*}
\mathcal{MS}(\alpha)\subset\mathcal{PS} \quad (|z|\leq r_{2}),
\end{equation*}
where $r_{2}\in(0,1)$ is the least positive root of the equation:
\begin{equation*}
\frac{1}{4\sin^{2}{\alpha}}\bigg\{\log\big[N(r,\alpha)\big]\bigg\}^{2}-\frac{1}{\sin{\alpha}}\big[M_{1}(r,\alpha)-M_{2}(r,\alpha)\big]-1=0  \quad  \big(0\leq r<1\big),
\end{equation*}
where $M_{1}(r,\alpha)$, $M_{2}(r,\alpha)$ and $N(r,\alpha)$ are given by \eqref{83}, \eqref{84} and \eqref{85}, respectively.
\end{theorem}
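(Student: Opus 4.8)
The plan is to translate the parabolic starlike condition into a single scalar inequality involving $\arg Q(z)$ and $\log|Q(z)|$, bound that inequality uniformly over $|z|\le r$ using the estimates already established in Theorem~\ref{th81}, and then run an intermediate value argument on the resulting function of $r$, exactly as in the proof of Theorem~\ref{th31}.

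First I would recall that $f\in\mathcal{PS}$ demands $|w-1|\le\Re(w)$ with $w=zf'(z)/f(z)$. The lower estimate in \eqref{81} gives $\Re(w)\ge 1+\frac{1}{2\sin\alpha}\big[M_{1}(r,\alpha)-M_{2}(r,\alpha)\big]>0$, so the right-hand side is positive and I may square: writing $w=u+iv$, the requirement $|w-1|^{2}\le u^{2}$ is equivalent to $v^{2}\le 2u-1$. Invoking the representation \eqref{813}, namely $u=1+\frac{1}{2\sin\alpha}\arg\big(Q(z)\big)$ and $v=-\frac{1}{2\sin\alpha}\log\big|Q(z)\big|$, this rearranges to
\[
\frac{1}{4\sin^{2}\alpha}\big(\log|Q(z)|\big)^{2}-\frac{1}{\sin\alpha}\arg\big(Q(z)\big)-1\le 0 .
\]

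Next I would produce a uniform upper bound for the left-hand side over $|z|\le r$. The two-sided estimate coming from \eqref{810} and \eqref{811} gives $1/N(r,\alpha)\le|Q(z)|\le N(r,\alpha)$, hence $\big(\log|Q(z)|\big)^{2}\le\big(\log N(r,\alpha)\big)^{2}$, while \eqref{812} yields $\arg\big(Q(z)\big)\ge M_{1}(r,\alpha)-M_{2}(r,\alpha)$. Since $\sin\alpha>0$ for $\pi/2\le\alpha<\pi$, combining these estimates shows the left-hand side is at most
\[
H(r):=\frac{1}{4\sin^{2}\alpha}\big\{\log[N(r,\alpha)]\big\}^{2}-\frac{1}{\sin\alpha}\big[M_{1}(r,\alpha)-M_{2}(r,\alpha)\big]-1 ,
\]
which is precisely the expression in the defining equation for $r_{2}$; thus $H(r)\le 0$ forces $f\in\mathcal{PS}$ on $\{|z|\le r\}$. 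I would then treat $H$ as a continuous function on $[0,1)$: by \eqref{r0}, $N(r,\alpha)\to 1$ and $M_{1},M_{2}\to 0$ as $r\to0^{+}$, so $H(0)=-1<0$; by \eqref{r1}, $N(r,\alpha)\to+\infty$ while $M_{1}-M_{2}$ stays bounded, so $\lim_{r\to1^{-}}H(r)=+\infty>0$. By the intermediate value theorem $H(r)=0$ has a least positive root $r_{2}\in(0,1)$, and $H(r)<0$ for all $r<r_{2}$, giving the inclusion.

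The main obstacle I anticipate is justifying the term-by-term bounding in the third step: the extremal values of $|Q(z)|$ and of $\arg\big(Q(z)\big)$ need not be attained at the same point of $|z|=r$, so I must be careful that replacing each summand by its worst case still produces a valid \emph{upper} bound for the whole expression. This is legitimate here precisely because the goal is a sufficient condition holding uniformly in $z$ rather than the sharp value of the left-hand side. A secondary point worth checking explicitly is the strict positivity $\Re(w)>0$ used to square the parabolic inequality, which is exactly the lower estimate in \eqref{81}.
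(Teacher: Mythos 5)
Your proof is correct and follows essentially the same route as the paper: reduce membership in $\mathcal{PS}$ to the scalar condition $v^{2}\le 2u-1$, bound $u$ from below and $v^{2}$ from above via the estimates \eqref{81}--\eqref{82} of Theorem \ref{th81}, and apply the intermediate value theorem to the same function $H(r)$ using \eqref{r0} and \eqref{r1}. If anything, you are more careful than the paper, which leaves implicit both the positivity of $\Re\big(zf'(z)/f(z)\big)$ needed to square the parabolic inequality and the legitimacy of the independent worst-case bounds on $\arg Q(z)$ and $\log|Q(z)|$.
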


\begin{proof}
We note that $f\in\mathcal{PS}$ if and only if the function $zf'(z)/f(z)$ is in the parabolic region given by
\begin{equation*}
\Lambda=\{(u,v):\ v^2<2u-1\}.
\end{equation*}
Thus, by combining \eqref{81} and \eqref{82}, for the function $f\in\mathcal{PS}$ in $\U$, it suffices to show that
\begin{equation*}
\bigg(1+\frac{1}{2\sin{\alpha}}\big[M_{1}(r,\alpha)-M_{2}(r,\alpha)\big],\; \frac{1}{2\sin{\alpha}}\log\big[N(r,\alpha)\big]\bigg)\in\Lambda,
\end{equation*}
that is,
\begin{equation*}
\frac{1}{4\sin^{2}{\alpha}}\bigg\{\log\big[N(r,\alpha)\big]\bigg\}^{2}-\frac{1}{\sin{\alpha}}\big[M_{1}(r,\alpha)-M_{2}(r,\alpha)\big]-1<0.
\end{equation*}
We now define a continuous function $H(r)$ by
\begin{equation*}
H(r):=\frac{1}{4\sin^{2}{\alpha}}\bigg\{\log\big[N(r,\alpha)\big]\bigg\}^{2}-\frac{1}{\sin{\alpha}}\big[M_{1}(r,\alpha)-M_{2}(r,\alpha)\big]-1  \quad (0\leq r<1).
\end{equation*}
In view of \eqref{r0} and \eqref{r1}, we have
\begin{equation*}
H(0)=-1<0 \ {\rm and} \ \lim_{r\rightarrow1-}H(r)=+\infty.
\end{equation*}
Hence the equation $H(r)=0$ has a solution in $(0,1)$. Let $r_{2}\in(0,1)$ be the least positive root of $H(r)=0$.
Then $H(r)<0$ for all $r<r_{2}$. Therefore, we have $f\in\mathcal{PS}$ for all $z\ (|z|\leq r_{2})$.
\end{proof}

\begin{theorem}\label{th51}
Let $\pi/2\leq\alpha<\pi$. Then
\begin{equation*}
\mathcal{MS}(\alpha)\subset\mathcal{SL} \qquad (|z|\leq r_{0}),
\end{equation*}
where $r_{0}:=\min\{r_{3},\; r_{4}\}$, $r_{3},\; r_{4}\in(0,1) $ are the least positive root of the equations:
\begin{equation}\label{r3}
\begin{split}
&\ \ \ \ \  \bigg\{\bigg(1+\frac{1}{2\sin{\alpha}}\big[M_{1}(r,\alpha)+M_{2}(r,\alpha)\big]\bigg)^{2}
+\frac{1}{4\sin^{2}{\alpha}}\bigg(\log\big[N(r,\alpha)\big]\bigg)^{2}\bigg\}^{2}\\
&=2\bigg(1+\frac{1}{2\sin{\alpha}}\big[M_{1}(r,\alpha)+M_{2}(r,\alpha)\big]\bigg)^{2}
-\frac{1}{2\sin^{2}{\alpha}}\bigg(\log\big[N(r,\alpha)\big]\bigg)^{2},
\end{split}
\end{equation}
and
\begin{equation}\label{r4}
\begin{split}
&\ \ \ \ \  \bigg\{\bigg(1+\frac{1}{2\sin{\alpha}}\big[M_{1}(r,\alpha)-M_{2}(r,\alpha)\big]\bigg]^{2}
+\frac{1}{4\sin^{2}{\alpha}}\bigg(\log\big[N(r,\alpha)\big]\bigg)^{2}\bigg\}^{2}\\
&=2\bigg(1+\frac{1}{2\sin{\alpha}}\big[M_{1}(r,\alpha)-M_{2}(r,\alpha)\big]\bigg)^{2}
-\frac{1}{2\sin^{2}{\alpha}}\bigg(\log\big[N(r,\alpha)\big]\bigg)^{2},
\end{split}
\end{equation}
respectively, where $M_{1}(r,\alpha)$, $M_{2}(r,\alpha)$ and $N(r,\alpha)$ are given by \eqref{83}, \eqref{84} and \eqref{85}, respectively.
\end{theorem}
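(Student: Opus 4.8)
The plan is to translate the membership condition for $\mathcal{SL}$ into a single polynomial inequality in the image coordinates, and then to verify that the rectangle supplied by Theorem \ref{th81} fits inside the corresponding region. Recall that $\sqrt{1+z}$ maps $\U$ univalently onto the interior of the right half of the lemniscate of Bernoulli: writing $\omega=\sqrt{1+z}$ gives $\omega^2-1=z$, so $|z|<1$ is equivalent to the pair of conditions $|\omega^2-1|<1$ and $\Re(\omega)>0$. Since $\sqrt{1+z}$ is univalent, the requirement $zf'(z)/f(z)\prec\sqrt{1+z}$ amounts to asking that $\omega:=zf'(z)/f(z)$ satisfy $|\omega^2-1|<1$ and $\Re(\omega)>0$. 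Writing $\omega=u+iv$ and expanding $|\omega^2-1|^2=(u^2-v^2-1)^2+4u^2v^2=(u^2+v^2)^2-2(u^2-v^2)+1$, the first condition becomes
\begin{equation*}
\Psi(u,v):=\big(u^2+v^2\big)^2-2\big(u^2-v^2\big)<0.
\end{equation*}
Thus the problem reduces to showing $\Psi(u,v)<0$ for every value $\omega=u+iv$ that $zf'(z)/f(z)$ can attain when $f\in\mathcal{MS}(\alpha)$ and $|z|\le r$.

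First I would invoke Theorem \ref{th81}: for $f\in\mathcal{MS}(\alpha)$ and $|z|=r<1$, the point $\omega=zf'(z)/f(z)$ lies in the closed rectangle $\mathcal{R}(r)=[u_{\min},u_{\max}]\times[-v_{\max},v_{\max}]$, where $u_{\min}=1+\frac{1}{2\sin\alpha}[M_1-M_2]$, $u_{\max}=1+\frac{1}{2\sin\alpha}[M_1+M_2]$, and $v_{\max}=\frac{1}{2\sin\alpha}\log N$ (all evaluated at $(r,\alpha)$). Since $u_{\min}>0$, exactly as recorded in the proof of Theorem \ref{th31}, the condition $\Re(\omega)>0$ holds automatically, so only $\Psi<0$ on $\mathcal{R}(r)$ remains. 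The key reduction is that the supremum of $\Psi$ over the rectangle is attained at a corner: from $\Psi_v=4v(u^2+v^2+1)$, which has the sign of $v$, $\Psi$ is increasing in $|v|$, so $\Psi(u,v)\le\Psi(u,v_{\max})$; and writing $s=u^2$ one finds $\Psi=s^2+2s(v^2-1)+(v^4+2v^2)$, an upward quadratic in $s$, so its maximum over $u\in[u_{\min},u_{\max}]$ occurs at an endpoint. By the evenness of $\Psi$ in $v$, it follows that $\sup_{\mathcal{R}(r)}\Psi=\max\{\Psi(u_{\min},v_{\max}),\,\Psi(u_{\max},v_{\max})\}$.

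It then remains to read off that the strict inequalities $\Psi(u_{\max},v_{\max})<0$ and $\Psi(u_{\min},v_{\max})<0$ are exactly the strict versions of \eqref{r3} and \eqref{r4}. Indeed, $\Psi(u_{\max},v_{\max})=0$ reads $(u_{\max}^2+v_{\max}^2)^2=2(u_{\max}^2-v_{\max}^2)$, which upon substituting $u_{\max}=1+\frac{1}{2\sin\alpha}[M_1+M_2]$ and $v_{\max}^2=\frac{1}{4\sin^2\alpha}(\log N)^2$ is precisely \eqref{r3}, and likewise $\Psi(u_{\min},v_{\max})=0$ gives \eqref{r4}. I would then set $P_3(r):=\Psi(u_{\max},v_{\max})$ and $P_4(r):=\Psi(u_{\min},v_{\max})$, both continuous on $[0,1)$. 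By \eqref{r0}, at $r=0$ we have $u_{\min}=u_{\max}=1$ and $v_{\max}=0$, so $P_3(0)=P_4(0)=\Psi(1,0)=-1<0$; by \eqref{r1}, $N(r,\alpha)\to+\infty$ forces $v_{\max}\to+\infty$, so the dominant term $(u^2+v^2)^2$ drives $P_3(r),P_4(r)\to+\infty$ as $r\to1^-$. The intermediate value theorem yields least positive roots $r_3,r_4\in(0,1)$ of \eqref{r3} and \eqref{r4}, and $P_3,P_4$ remain negative on $[0,r_3)$ and $[0,r_4)$ respectively. Hence for $r\le r_0=\min\{r_3,r_4\}$ both corners satisfy $\Psi\le0$, so $\mathcal{R}(r)$ lies in the lemniscate region and $f\in\mathcal{SL}$ on $|z|\le r_0$.

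The hard part will be the reduction-to-corners step: justifying rigorously that the maximum of $\Psi$ over $\mathcal{R}(r)$ is taken at $(u_{\min},v_{\max})$ or $(u_{\max},v_{\max})$, and not at an interior or edge point. The sign of $\Psi_v$ disposes of the $v$-direction, and the convexity in $s=u^2$ disposes of the $u$-direction, but one must also confirm that the sole interior $u$-critical point $u^2=1-v^2$ is a minimum (the vertex of the upward parabola), so it cannot host the maximum, and that the corner at $v=-v_{\max}$ needs no separate treatment thanks to the $v$-evenness of $\Psi$. Once this is in place, everything else is routine bookkeeping with the limits \eqref{r0}–\eqref{r1} and the explicit forms \eqref{83}–\eqref{85}, and the necessity of taking $r_0=\min\{r_3,r_4\}$ reflects that neither corner dominates the other uniformly in $\alpha$.
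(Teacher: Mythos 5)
Your proposal is correct and follows essentially the same route as the paper's own proof: translate $f\in\mathcal{SL}$ into the lemniscate inequality $(u^2+v^2)^2<2(u^2-v^2)$, feed in the rectangle bounds from Theorem \ref{th81}, reduce to the two corner points $(u_{\min},v_{\max})$ and $(u_{\max},v_{\max})$ to obtain \eqref{r3} and \eqref{r4}, and then get $r_3$, $r_4$ by the intermediate value theorem from the limits \eqref{r0} and \eqref{r1}. The only difference is that you explicitly justify the reduction to corners (via the sign of $\Psi_v=4v(u^2+v^2+1)$ and convexity of $\Psi$ in $s=u^2$) and the automatic positivity $\Re(\omega)>0$ from $u_{\min}>0$, both of which the paper asserts implicitly --- so your write-up is, if anything, more complete than the published proof.
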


\begin{proof}
We note that $f\in\mathcal{SL}$ if and only if the function $zf'(z)/f(z)$ is in the bounded region given by
\begin{equation*}
\Xi=\big\{(u,v):\ \big(u^{2}+v^{2}\big)^{2}<2\big(u^{2}-v^{2}\big)\big\}.
\end{equation*}
Thus, by combining \eqref{81} and \eqref{82}, for the function $f\in\mathcal{SL}$ in $\U$, it suffices to show that
\begin{equation*}
\bigg(1+\frac{1}{2\sin{\alpha}}\big[M_{1}(r,\alpha)\pm M_{2}(r,\alpha)\big],\; \frac{1}{2\sin{\alpha}}\log\big[N(r,\alpha)\big]\bigg)\in\Xi,
\end{equation*}
that is,
\begin{equation}\label{p3}
\begin{split}
&\ \ \ \ \  \bigg\{\bigg(1+\frac{1}{2\sin{\alpha}}\big[M_{1}(r,\alpha)+M_{2}(r,\alpha)\big]\bigg)^{2}
+\frac{1}{4\sin^{2}{\alpha}}\bigg(\log\big[N(r,\alpha)\big]\bigg)^{2}\bigg\}^{2}\\
&<2\bigg(1+\frac{1}{2\sin{\alpha}}\big[M_{1}(r,\alpha)+M_{2}(r,\alpha)\big]\bigg)^{2}
-\frac{1}{2\sin^{2}{\alpha}}\bigg(\log\big[N(r,\alpha)\big]\bigg)^{2},
\end{split}
\end{equation}
and
\begin{equation}\label{p4}
\begin{split}
&\ \ \ \ \  \bigg\{\bigg(1+\frac{1}{2\sin{\alpha}}\big[M_{1}(r,\alpha)-M_{2}(r,\alpha)\big]\bigg)^{2}
+\frac{1}{4\sin^{2}{\alpha}}\bigg(\log\big[N(r,\alpha)\big]\bigg)^{2}\bigg\}^{2}\\
&<2\bigg(1+\frac{1}{2\sin{\alpha}}\big[M_{1}(r,\alpha)-M_{2}(r,\alpha)\big]\bigg)^{2}
-\frac{1}{2\sin^{2}{\alpha}}\bigg(\log\big[N(r,\alpha)\big]\bigg)^{2},
\end{split}
\end{equation}
respectively. We define a continuous function $P(r)$ by
\begin{equation*}
\begin{split}
P(r):=&\bigg\{\bigg(1+\frac{1}{2\sin{\alpha}}\big[M_{1}(r,\alpha)+M_{2}(r,\alpha)\big]\bigg)^{2}
+\frac{1}{4\sin^{2}{\alpha}}\bigg(\log\big[N(r,\alpha)\big]\bigg)^{2}\bigg\}^{2}\\
&-2\bigg(1+\frac{1}{2\sin{\alpha}}\big[M_{1}(r,\alpha)+M_{2}(r,\alpha)\big]\bigg)^{2}
+\frac{1}{2\sin^{2}{\alpha}}\bigg(\log\big[N(r,\alpha)\big]\bigg)^{2} \quad (0\leq r<1)
\end{split}
\end{equation*}
In view of \eqref{r0} and \eqref{r1}, we have
\begin{equation*}
P(0)=-1<0 \ {\rm and} \ \lim_{r\rightarrow1-}P(r)=+\infty.
\end{equation*}
Hence the equation $P(r)=0$ has a solution in $(0,1)$. Let $r_{3}\in(0,1)$ be the least positive root of $H(r)=0$.
Then $P(r)<0$ for all $r<r_{3}$. Using the same approach as above, we can find $r_{4}\in(0,1)$ be the least positive root of the equation \eqref{r4},
and the inequality \eqref{p4} holds for all $r<r_{4}$. So if we take $r_{0}:=\min\{r_{3},\; r_{4}\}$, then we have $f\in\mathcal{SL}$ for all $z\ (|z|\leq r_{0})$.
\end{proof}

\begin{remark}
{\rm
Putting $\alpha=\pi/2$ in Theorems \ref{th31}-\ref{th51}, we obtain the radii of inclusion relations between several known classes and the class $\mathcal{MS}(\alpha)$.
Furthermore, the results are compared with the corresponding results in \cite{kar2017} (see Table 1).

\begin{table}[!hbp]
\begin{center}
{
\caption{The radii of inclusion relations}
\vskip.05in
\begin{tabular}{|c|c|c|}
\hline
\hline
Inclusion relations  & Radii in this paper & Radii in Ref. \cite{kar2017} \\
\hline
$\mathcal{MS}(\pi/2)\subset \mathcal{SS}(1/2) \quad (|z|\leq r_{1})$        & $r_{1}\approx 0.493918$   & $r_{1}\approx 0.260446$ \\
\hline
$\mathcal{MS}(\pi/2)\subset \mathcal{PS}  \quad (|z|\leq r_{2})$            & $r_{2}\approx 0.421547$   & $r_{2}\approx 0.246969$ \\
\hline
$\mathcal{MS}(\pi/2)\subset \mathcal{SL}  \quad (|z|\leq r_{0})$            & $r_{0}\approx 0.304506$   & $r_{0}\approx 0.200667$ \\
\hline
\end{tabular}
}
\end{center}
\end{table}
}
\end{remark}

\vskip .20in
\begin{center}{\sc Acknowledgments}
\end{center}

\vskip .05in
The present investigation was supported by the \textit{Natural Science Foundation of Hunan Province} under Grant no. 2016JJ2036 of the People's Republic of China.

\vskip .20in

\end{document}